\newtheorem{theorem}{Theorem}
\newtheorem{corollary}[theorem]{Corollary}
\newtheorem{conjecture}[theorem]{Conjecture}
\newenvironment{definition}[1][Definition]{\begin{trivlist}
\item[\hskip \labelsep {\bfseries #1}]}{\end{trivlist}}
\title{Some Thoughts on Determining Symmetric Palintiples}
\author{Benjamin V. Holt}
\begin{document}

\maketitle

\begin{abstract}
A palintiple is a natural number which is an integer multiple of its digit reversal. A previous paper partitions all palintiples into three distinct classes according to patterns in the carries and then determines all palintiples belonging to the shifted-symmetric class. In this paper we consider a strategy for finding all symmetric palintiples based upon recent work. We also discuss the last case of asymmetric palintiples and consider bases for which asymmetric palintiples do not exist.
\end{abstract}

\section{Introduction}

Natural numbers which are integer multiples of 
their digit reversals are called \textit{palintiples}. 
The most well-known examples of base-10 palintiples include $87912=4\cdot 21978$ and $98901 = 9 \cdot 10989$. 

In general, letting $p=(d_k, d_{k-1},\ldots, d_0)_b$ represent the natural number 
$\sum_{j=0}^{k}d_jb^j$ where $b>1$ is a natural number and $0\leq d_j < b$ for all $0\leq j \leq k$,
we say $p$ is an $(n,b)$-\textit{palintiple} provided 
\[
(d_k, d_{k-1},\ldots, d_0)_b=n(d_0, \ldots, d_{k-1}, d_k)_b.
\]
for some natural number $n$. Base-$b$ palindromes and examples with leading zeros are not considered
so that $1<n<b$ and $b>2$. 

Previous work on palintiple numbers mostly restricts its attention to a limited number of digits 
\cite{sutcliffe, kacz, kolsmol, pudwell} or on characterizing base-10 palintiples \cite{hoey, web_wil}. 
Other work includes constructive methods for finding palintiples 
when they exist \cite{young_1,sloane}. 
Both of these papers use a graph-theoretical framework to gain insight into the problem.
The graphs essentially visualize an efficient search a for palintiples using the potential carries as nodes.
As Sloane \cite{sloane} and Young \cite{young_1,young_2} have noted, 
such graphs have interesting properties in and of themselves.
The work of \cite{holt} establishes some general properties of palintiple 
numbers of any base having an arbitrary number of digits. The methods therein
pay particular attention to the structure of the carries which naturally separates all 
palintiples into three mutually exclusive and exhaustive classes. 
Letting $p=(d_k, d_{k-1},\ldots, d_0)_b$ be an $(n,b)$-palintiple with carries $c_k$,
$c_{k-1}$,$\ldots$, $c_0$, these classes are defined as follows: we say that $p$ is \textit{symmetric} if
$c_j=c_{k-j}$ for all $0\leq j \leq k$ 
and that $p$ is \textit{shifted-symmetric} if $c_j=c_{k-j+1}$ for all $0 \leq j \leq k$.
A palintiple that is neither symmetric nor shifted-symmetric is called \textit {asymmetric}.
The examples given above are symmetric. 

Among other things, \cite{holt} finds all shifted-symmetric palintiples. 
This paper continues these efforts by further conjecturing the nature of 
all symmetric palintiples while further characterizing them.
A characterization of shifted-symmetric palintiples is also presented in order to gain insight into the asymmetric case. 
The result is a characterization of asymmetric palintiples which involves a only a simple condition on the base and multiplier.
Lastly, we address the problem of bases for which asymmetric palintiples do not exist.   

For more examples of palintiples as well as a more thorough introduction to the topic, the reader is directed 
to \cite{holt} and \cite{sloane}. 

\section{Determining All Symmetric Palintiples}

For convenience, we state two results found in \cite{young_1, holt, sloane}. 
Let $(d_k, d_{k-1},\ldots, d_0)_b$ be an $(n,b)$-palintiple and let $c_j$ be the $j$th carry. Then
\begin{equation}
b c_{j+1}-c_j=nd_{k-j}-d_{j}
\label{fund}
\end{equation}
for $0\leq j \leq k$.
From these equations the digits may be stated in terms of the carries:
\begin{equation}
d_j=\frac{nbc_{k-j+1}-nc_{k-j}+bc_{j+1}-c_j}{n^2-1}
\label{digits}
\end{equation}
for $0\leq j \leq k$.

We now consider a conjecture which, if proved, would answer an open question posed by \cite{holt,sloane} and
further characterize symmetric palintiples.

\begin{conjecture}
Suppose $p=(d_k,d_{k-1}, \ldots, d_0)_b$ is an $(n,b)$-palintiple with carries $c_k, c_{k-1},\ldots,c_0$.
Then, if $p$ is symmetric, then $c_j \equiv 0 \mod (n-1)$ for $0 \leq j \leq k$.
\label{carries_mod_n_minus_one}
\end{conjecture}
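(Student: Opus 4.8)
The plan is to first translate the divisibility statement into a statement about the \emph{sizes} of the carries. Recall the standard bound $0 \le c_j \le n-1$ for the carries of any $(n,b)$-palintiple (established in \cite{holt}); granting this, the congruence $c_j \equiv 0 \pmod{n-1}$ is equivalent to the dichotomy $c_j \in \{0,\,n-1\}$. Thus it suffices to prove that in a symmetric palintiple every carry is either $0$ or $n-1$. Two pieces of boundary data come for free: since $c_0=0$ (there is no carry into the units place) and symmetry gives $c_{k-j}=c_j$, the choice $j=0$ forces $c_k=0$ as well, while the absence of overflow gives $c_{k+1}=0$.

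Next I would extract the relations that symmetry imposes on neighbouring carries. Writing \eqref{fund} at index $j$ and again at index $k-j$ and substituting $c_{k-j}=c_j$ together with $c_{k-j+1}=c_{j-1}$, then adding and subtracting the two resulting equations, yields
\[
b(c_{j+1}+c_{j-1})-2c_j=(n-1)(d_j+d_{k-j}),\qquad b(c_{j+1}-c_{j-1})=(n+1)(d_{k-j}-d_j).
\]
The first relation, read modulo $n-1$, becomes the homogeneous second-order recurrence $b(c_{j+1}+c_{j-1})\equiv 2c_j \pmod{n-1}$ for the residues $\bar c_j := c_j \bmod (n-1)$; the second, read modulo $n+1$, gives $b(c_{j+1}-c_{j-1})\equiv 0 \pmod{n+1}$. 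The exact boundary computation $bc_1=(n^2-1)d_0$ (together with $d_k=nd_0$) supplies a concrete seed for these recurrences.

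With these in hand the strategy is to treat the residue recurrence as a two-point boundary value problem anchored by $\bar c_0=\bar c_k=0$. Whenever $b$ is invertible modulo $n-1$, the recurrence determines $\bar c_{j+1}$ from $\bar c_j,\bar c_{j-1}$, so the two vanishing boundary values propagate and force $\bar c_j\equiv 0$ throughout, which is precisely the desired conclusion. The companion difference relation modulo $n+1$ is then meant to be combined with this, via the bound $c_j\le n-1<n+1$, to collapse the admissible carry values to $\{0,\,n-1\}$ directly.

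I expect the genuine obstacle to be the degenerate case $\gcd(b,n-1)>1$, which really does occur: the base-$10$ example $98901=9\cdot 10989$ has $\gcd(b,n-1)=\gcd(10,8)=2$, and in fact there $n+1\mid b$, so \emph{both} congruence recurrences become partly vacuous. In such cases the boundary data pin down $\bar c_j$ only modulo $(n-1)/\gcd(b,n-1)$, leaving carries congruent-but-not-equal to an allowed value that the local equations alone do not exclude; indeed $bc_1=(n^2-1)d_0$ admits spurious single-position solutions such as $(b,n,d_0,c_1)=(15,4,2,2)$. Ruling these out appears to require feeding the global closure conditions $c_0=c_k=c_{k+1}=0$ and the integrality of every $d_j$ back into the recurrences \emph{simultaneously}, so that no carry can settle strictly between $0$ and $n-1$. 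Making that global-to-local passage rigorous — rather than any single congruence manipulation — is, I believe, the crux, and is presumably why the statement is advanced here only as a conjecture.
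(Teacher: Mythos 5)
The first thing to say is that the paper does not prove this statement: it is posed as Conjecture \ref{carries_mod_n_minus_one} and explicitly left open. The author's only progress toward it is the implication chain at the end of Section 2, which uses Kendrick's theorem (that $Y(n,b)\backsimeq Y(10,9)$ if and only if $(n+1)\mid b$) to establish $(2)\Rightarrow(3)\Rightarrow(4)\Rightarrow(1)$, thereby reducing the conjecture to the open implication ``$p$ symmetric $\Rightarrow Y(n,b)\backsimeq Y(10,9)$.'' So there is no proof of record to measure yours against, and your purely congruence-theoretic route is genuinely different from the graph-isomorphism reduction the paper proposes.

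Your algebra is sound: pairing Equation \ref{fund} at indices $j$ and $k-j$ and substituting $c_{k-j}=c_j$, $c_{k-j+1}=c_{j-1}$ does yield the stated sum and difference relations, and $bc_1=(n^2-1)d_0$ together with $d_k=nd_0$ is the correct boundary identity. Two remarks on the gaps. First, your propagation mechanism is misstated even in the favorable case: a second-order recurrence is determined by two \emph{consecutive} values, and $\bar c_0=\bar c_k=0$ sit at opposite ends, so they do not by themselves force the residues to vanish --- with $\bar c_0=0$ the solutions form a one-parameter family $\bar c_j=\bar c_1U_j$ (where $U_j$ solves the recurrence with $U_0=0$, $U_1=1$), and $\bar c_1U_k\equiv 0$ kills $\bar c_1$ only if $U_k$ is a unit mod $n-1$, which you have not shown. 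The repair is to use your seed correctly: when $\gcd(b,n-1)=1$, the identity $bc_1=(n^2-1)d_0$ already gives $\bar c_1=0$, and then the two \emph{consecutive} zeros $\bar c_0=\bar c_1=0$ propagate forward. Second, the degenerate case $\gcd(b,n-1)>1$ that you candidly flag is not a corner case but the main event --- it contains the classical examples such as $(n,b)=(9,10)$ --- and nothing in the proposal handles it; as you say yourself, the global-to-local passage needed there is exactly what is missing. So the proposal is an honest partial attack (after repair it essentially settles only the case $\gcd(b,n-1)=1$), not a proof, which is consistent with the statement's status in the paper as an open conjecture.
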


\begin{corollary}
If the Conjecture \ref{carries_mod_n_minus_one} is true, 
then an $(n,b)$-palintiple is symmetric if and only if $n+1$ divides $b$.
\label{n_plus_one_divides_b}
\end{corollary}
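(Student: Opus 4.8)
\emph{The plan.} I will establish the two implications separately, invoking Conjecture~\ref{carries_mod_n_minus_one} only for the forward direction; the converse will be a direct construction needing no unproved input. Throughout I use the boundary conventions $c_0=0$ and $c_{k+1}=0$, recording that nothing is carried into the units position and that the product does not overflow past the leading digit; these are precisely Equation~\eqref{fund} read at $j=0$ and at $j=k$.

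\emph{Forward direction.} Suppose $p$ is symmetric. Symmetry at $j=0$ gives $c_k=c_0=0$, so Equation~\eqref{fund} at $j=k$ becomes $nd_0-d_k=bc_{k+1}-c_k=0$, i.e.\ $d_k=nd_0$. Substituting this into Equation~\eqref{fund} at $j=0$ yields $bc_1=nd_k-d_0=(n^2-1)d_0$. Since the reversal has no leading zero we have $d_0\ge 1$, and since $d_k=nd_0$ is a digit we have $nd_0\le b-1$; together these force the carry $c_1=(n^2-1)d_0/b$ to be a positive integer with $c_1<n$, hence $1\le c_1\le n-1$. Now I apply the Conjecture in the single case $j=1$: it gives $(n-1)\mid c_1$, and the only such multiple in $[1,n-1]$ is $n-1$ itself, so $c_1=n-1$. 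Therefore $d_0=bc_1/(n^2-1)=b/(n+1)$, and as $d_0$ is an integer we conclude $(n+1)\mid b$. The Conjecture is essential exactly here: without it, $bc_1=(n^2-1)d_0$ allows values of $c_1$ in $[1,n-1]$ not divisible by $n-1$, which need not force $(n+1)\mid b$.

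\emph{Converse.} Suppose $b=(n+1)m$ with $m\ge 1$. Because Equation~\eqref{digits} is obtained by solving the system~\eqref{fund}, substituting any carry sequence into~\eqref{digits} (with $c_0=c_{k+1}=0$) yields digits automatically satisfying~\eqref{fund}; so it suffices to exhibit a symmetric carry sequence whose digits are integers in $[0,b)$ with $d_0,d_k\ne 0$. Guided by $98901=9\cdot 10989$, whose carries are $0,n-1,\dots,n-1,0$, I would take any $k\ge 3$ and the palindromic string $c_0=0$, $c_1=\cdots=c_{k-1}=n-1$, $c_k=0$. Substituting into~\eqref{digits} and cancelling the factor $n-1$ using $b=(n+1)m$ should give $d_0=m$, $d_1=m-1$, $d_j=b-1$ for $2\le j\le k-2$, $d_{k-1}=nm-1$, and $d_k=nm$; each lies in $[0,b)$ with $d_0=m\ge 1$ and $d_k=nm\ge 1$, so this is a genuine symmetric $(n,b)$-palintiple.

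\emph{The main obstacle.} The forward implication is essentially forced once the Conjecture supplies $(n-1)\mid c_1$; the real work lies in making the converse watertight and in fixing the reading of the statement. For the construction one must check that the digit bounds never degenerate---in particular $d_{k-1}=nm-1<(n+1)m=b$ and $d_1=m-1\ge 0$---and observe that $k=2$ genuinely fails, since the same computation returns $d_1=-1$, which is why symmetric palintiples require at least four digits. Finally, since the forward half forbids symmetric palintiples unless $(n+1)\mid b$ while the converse produces one whenever $(n+1)\mid b$, the biconditional is cleanest read existentially: a symmetric $(n,b)$-palintiple exists if and only if $(n+1)\mid b$.
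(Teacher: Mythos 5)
Your forward direction is correct and is in substance the paper's own argument: symmetry together with the boundary convention $c_0=0$ gives $c_k=0$; the two boundary instances of Equation \ref{fund} (equivalently, Equation \ref{digits} at $j=0$) give $(n^2-1)d_0=bc_1$; Conjecture \ref{carries_mod_n_minus_one} forces $c_1\in\{0,n-1\}$; the no-leading-zero condition on the reversal rules out $c_1=0$; and then $d_0=b/(n+1)$ being a positive integer yields $(n+1)\mid b$. The only differences are cosmetic (you re-derive $c_1\leq n-1$ and the formula for $d_0$ from Equation \ref{fund} rather than quoting \cite{holt} and Equation \ref{digits}).

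The converse is where there is a genuine gap. The corollary is a universally quantified statement about an arbitrary $(n,b)$-palintiple: if $(n+1)\mid b$, then \emph{every} $(n,b)$-palintiple is symmetric, i.e.\ its carries satisfy $c_j=c_{k-j}$ for all $j$. That is the version the paper imports from \cite{holt}, and it is the version the paper actually uses downstream: in the proof of Conjecture \ref{reg1} this corollary is invoked to conclude that a particular palintiple $p$, about which one knows only that $n+1$ divides $b$, must be symmetric, and Corollary \ref{pal_type} likewise rests on the universal reading. Your construction (carries $0,n-1,\dots,n-1,0$, digits $m,\ m-1,\ b-1,\dots,b-1,\ nm-1,\ nm$) is correct as far as it goes --- the digits do lie in $[0,b)$ and do satisfy Equation \ref{fund}, so a symmetric $(n,b)$-palintiple exists whenever $b=(n+1)m$ and $k\geq 3$ --- but existence of one symmetric example does not show that no non-symmetric $(n,b)$-palintiple can coexist with it. Rereading the biconditional existentially does not repair this; it replaces the corollary with a weaker statement that no longer supports its later applications. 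Nor can you recover the universal version from Corollary \ref{pal_type}, since that corollary sits downstream of this one and the argument would be circular. What the converse actually requires is an argument (the one cited from \cite{holt}) that the carry sequence of an \emph{arbitrary} $(n,b)$-palintiple is forced to be symmetric once $n+1$ divides $b$.
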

\begin{proof}
Suppose $p=(d_k,d_{k-1}, \ldots, d_0)_b$ is an $(n,b)$-palintiple with
$c_k, c_{k-1},\ldots,c_0$ as its carries.
The reverse implication is established in \cite{holt}. 
Suppose then $p$ is symmetric.
Then $c_k=c_0=0$. Supposing Conjecture \ref{carries_mod_n_minus_one} holds, $c_1 \equiv 0 \mod (n-1)$. 
If $c_1=0$ then $d_0=0$ by Equation \ref{digits}. 
Therefore since $c_j \leq n-1$ for all $0 \leq j \leq k$ as shown in \cite{holt}, $c_1=n-1$. 
Another application of Equation \ref{digits} finishes the proof. 
\end{proof}

Provided Conjecture \ref{carries_mod_n_minus_one}, then the following characterization of symmetric palintiples also holds.

\begin{conjecture}
$p=(d_k,d_{k-1}, \ldots, d_0)_b$ is a symmetric $(n,b)$-palintiple 
if and only if the following conditions hold:
\begin{enumerate}

\item $d_j=nqr_{k-j+1}+qr_{j+1}-r_j$ for all $0 \leq j \leq k$ where $q$ is an integer such that $b=q(n+1)$
and $r_k, r_{k-1},\ldots, r_0$ is a palindromic binary sequence.

\item $r_0=r_k=0$ and $r_1=r_{k-1}=1$. 

\item $r_{j-1}=r_{j+1}=1$ implies $r_j=1$ and $r_{j-1}=r_{j+1}=0$ implies $r_j=0$ 
for all $0<j<k$. 
\end{enumerate}
\label{reg1}
\end{conjecture}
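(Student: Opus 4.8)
The plan is to prove both implications by using Conjecture~\ref{carries_mod_n_minus_one} to replace the carry sequence with a binary one and then reducing every assertion to elementary algebra in these bits. Assuming Conjecture~\ref{carries_mod_n_minus_one}, if $p$ is symmetric then each carry satisfies $c_j\equiv 0 \pmod{n-1}$, and since $0\leq c_j\leq n-1$ (shown in \cite{holt}) each $c_j$ is either $0$ or $n-1$. I would therefore set $r_j=c_j/(n-1)\in\{0,1\}$, so that symmetry $c_j=c_{k-j}$ makes $(r_j)$ palindromic. By Corollary~\ref{n_plus_one_divides_b}, $n+1$ divides $b$, so $q=b/(n+1)$ is a positive integer. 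Substituting $c_j=(n-1)r_j$ into Equation~\ref{digits} and cancelling $n^2-1=(n-1)(n+1)$, then using $r_{k-j}=r_j$ to combine the middle terms, reproduces exactly the formula of condition~1; this simultaneously exhibits the required $q$.

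For the remaining forward conditions, the boundary carries $c_0=c_k=0$ give $r_0=r_k=0$, and the argument already used in the proof of Corollary~\ref{n_plus_one_divides_b} forces $c_1=n-1$, i.e. $r_1=1$, hence $r_{k-1}=1$ by symmetry, which is condition~2. Rewriting condition~1 by palindromicity as $d_j=nq\,r_{j-1}+q\,r_{j+1}-r_j$ and tabulating the eight choices of $(r_{j-1},r_{j+1},r_j)\in\{0,1\}^3$, every choice yields $0\leq d_j\leq b-1$ except $(0,0,1)$, which gives $d_j=-1$, and $(1,1,0)$, which gives $d_j=b$. Since a genuine palintiple has valid digits, neither configuration can occur for $0<j<k$, and this is precisely condition~3.

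Conversely, given a palindromic binary sequence $(r_j)$ satisfying conditions~2 and~3 together with $q=b/(n+1)$, I would set $c_j=(n-1)r_j$ and define the $d_j$ by condition~1, adopting the boundary convention $r_{k+1}=0$ to match $c_{k+1}=0$. The same value table, with conditions~2 and~3, guarantees $0\leq d_j\leq b-1$ for every $j$ and gives $d_k=nq>0$, so there is no leading zero. I would then verify Equation~\ref{fund} directly: computing $nd_{k-j}-d_j$ from condition~1 gives $b(n-1)r_{j+1}-n\,r_{k-j}+r_j$, while $bc_{j+1}-c_j=b(n-1)r_{j+1}-n\,r_j+r_j$, and these agree exactly because $r_{k-j}=r_j$. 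Since Equation~\ref{fund} together with $c_0=c_{k+1}=0$ and valid digits characterizes an $(n,b)$-palintiple (the sufficiency direction being available from \cite{holt}), $p$ is a palintiple whose carries form the symmetric sequence $c_j=(n-1)r_j=c_{k-j}$, so $p$ is symmetric.

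The conceptual heart, that Conjecture~\ref{carries_mod_n_minus_one} makes each carry binary, is what collapses the whole problem to the bits $r_j$; after that the forward direction is essentially handed to us. I expect the main obstacle to lie in the reverse direction, where one must invoke the sufficiency --- not merely the necessity --- of Equation~\ref{fund} for palintiple-hood, and must handle the boundary indices $r_{-1},r_{k+1}$ and the end digits $d_0,d_k$ with care, since condition~3 constrains only interior positions while condition~2 must govern the ends. Checking that condition~3 coincides exactly with digit-validity and that the fundamental relations close up correctly at $j=0$ and $j=k$ is routine, but is exactly where an off-by-one index slip or sign error would break the argument.
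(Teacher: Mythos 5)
Your proposal is correct and takes essentially the same route as the paper: use Conjecture~\ref{carries_mod_n_minus_one} to make each carry $0$ or $n-1$, set $r_j=c_j/(n-1)$, read conditions 1--3 off Equation~\ref{digits} and digit-validity, and reverse the construction for sufficiency. The only difference is that you explicitly verify Equation~\ref{fund} (with the boundary convention $r_{k+1}=0$) in the converse direction, a step the paper leaves as ``a routine matter.''
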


\begin{proof}

Suppose $p$ is a symmetric $(n,b)$-palintiple with carries $c_k, c_{k-1},\ldots,c_0$. 
Corollary \ref{n_plus_one_divides_b} implies that $b=q(n+1)$ for some integer $q$.
Since $c_j \leq n-1$ for all $0 \leq j \leq k$, 
an application of Equation \ref{digits} and the \textbf{Conjecture} 
show that the first condition is satisfied.

By definition $c_0=c_k=0$ so that $r_0=r_k=0$, and if $r_1=r_{k-1}\neq 1$, then $d_0=d_k=0$
which is a violation of the condition that there are no leading zeros.

If $r_{j-1}=r_{j+1}=1$ but $r_j=0$, then $d_j=b$ which is not a base-$b$ digit.
Similarly,  if $r_{j-1}=r_{j+1}=0$ but $r_j=1$, then $d_j=-1$.

Now suppose conditions 1, 2, and 3 hold. Conditions 2 and 3 guarantee that $0 \leq d_j<b$
and that there are no leading zeros. 
From here it is a routine matter to show that $p$ is an $(n,b)$-palintiple. Condition 1
guarantees that $n+1$ divides $b$ which, by Corollary \ref{n_plus_one_divides_b}, proves that $p$ is symmetric.
\end{proof}


Casting the above into the light of recent work by Kendrick \cite{kendrick} who showed that 
the $(n,b)$-Young graph, denoted here as $Y(n,b)$, is isomorphic to $Y(10,9)$ 
(otherwise known as the ``1089 graph'' since $9801=9\cdot 1089$ is a $(9,10)$-palintiple)
if and only if $n+1$ divides $b$ (as conjectured by Sloane \cite{sloane}), 
we ask if the following are equivalent for an $(n,b)$-palintiple $p$
with carries $c_k, c_{k-1}, \cdots, c_1, c_0$: 
\begin{enumerate}
\item $p$ is symmetric 
\item $Y(n,b) \backsimeq Y(10,9)$  
\item $c_j \equiv 0 \mod(n-1)$
\item $n+1$ divides $b$. 
\end{enumerate}

If $Y(n,b) \backsimeq Y(10,9)$, the work of Kendrick \cite{kendrick} 
shows that any node of has the form $[0,0]$, $[0,n-1]$, $[n-1,0]$, or $[n-1,n-1]$ 
which establishes $(2)\Longrightarrow(3)$.
 $(3)\Longrightarrow(4)$ is easily established since by Equation \ref{digits},
$d_0=\frac{bc_1}{n^2-1}$ and $d_0 \neq 0$.
$(4)\Longrightarrow(1)$ is demonstrated in \cite{holt}.

Proving the above equivalence (which as demonstrated above is tantamount to proving  
(1) $\Longrightarrow$ (2), and we leave it as an open question)
would prove Conjecture \ref{carries_mod_n_minus_one}.

\section{Some Notes on Asymmetric Palintiples}

In contrast to their symmetric and shifted-symmetric counterparts, 
it is not well understood under what conditions asymmetric palintiples exist.  
Whereas, conditions on $n$ and $b$ which characterize symmetric and shifted-symmetric palintiples  
given by Corollary \ref{reg1} and Theorem \ref{reg2condition} below also assure their existence
(see \cite{holt} and the proof of Theorem \ref{regbases} below), 
the same cannot be said for asymmetric palintiples.
Moreover, the behavior of asymmetric palintiples is considerably more haphazard.
On one hand, $k+1$-digit symmetric palintiples exist in every base for all $k \geq 3$.
Similarly, the existence of a shifted-symmetric $(n,b)$-palintiple ensures the existence of 
a $k+1$-digit example for all $k \geq 1$.
On the other hand, minimal examples of asymmetric palintiples might exist for only a certain number of digits.
(A $k+1$-digit $(n,b)$-palintiple is considered minimal if no $(n,b)$-palintiples exist for less than $k+1$ digits.)
This minimal number of digits can be quite large: the smallest $(106,420)$-palintiple is 105 digits long. 
Furthermore, unlike the symmetric and shifted-symmetric cases, 
the existence of a $k+1$-digit asymmetric $(n,b)$-palintiple does not guarantee the existence of a $k+2$-digit example.
(We note, however, that symmetrically concatenating digit strings of $(n,b)$-palintiples yields other representations
of larger $(n,b)$-palintiples (\cite{sutcliffe, kolsmol, young_1}). 
Thus, the existence of a $k+1$-digit minimal example always guarantees the existence of a $m(k+1)$-digit example 
for any natural number $m$.)
What is more, asymmetric palintiples seem to completely lack the kinds of patterns which are found in the carries and digits 
(Figure 1) of symmetric and shifted-symmetric palintiples.

\begin{center}
 \makebox[\textwidth][r]{\includegraphics[width=1\textwidth]{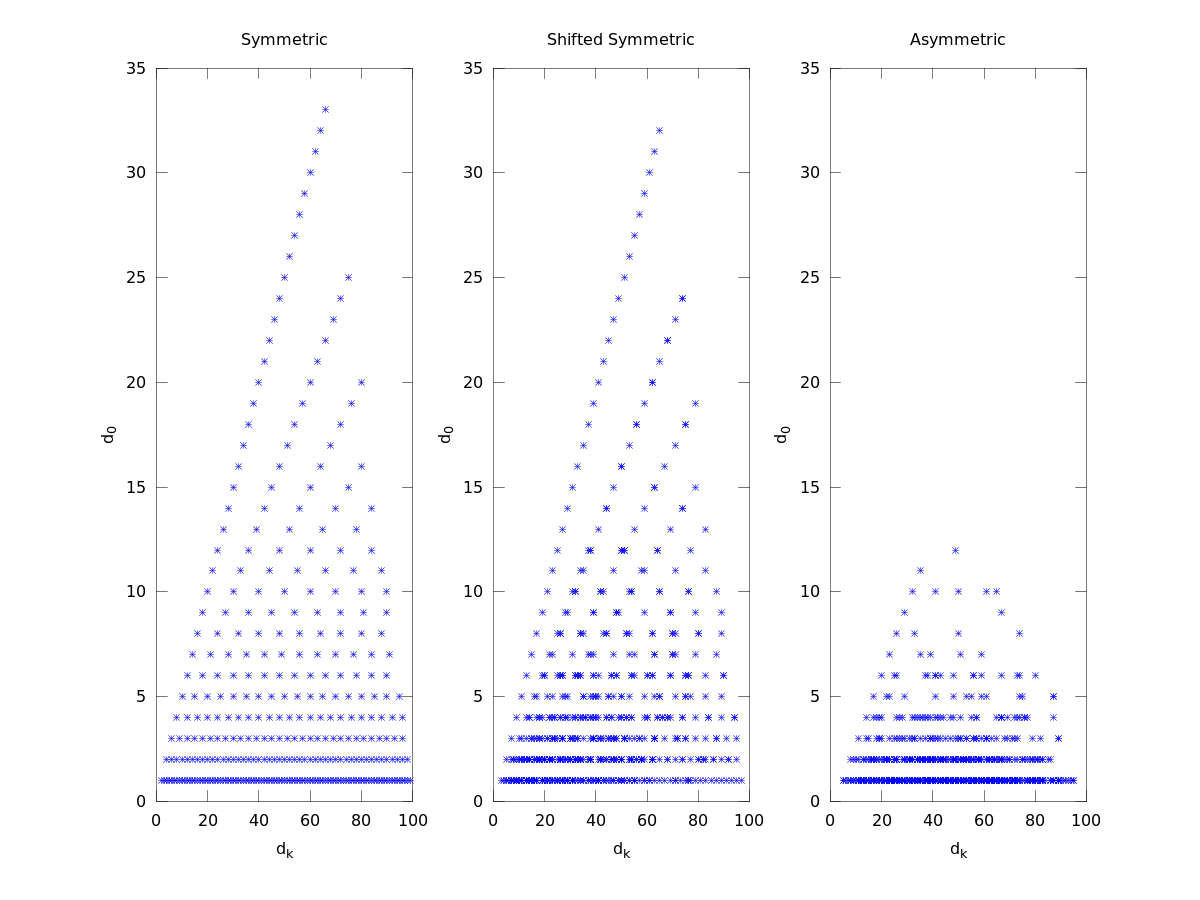}}
 \textbf{Figure 1}: A plot of $d_0$ versus $d_k$ for all minimal palintiples for $3 \leq b \leq 100$.
\end{center}

The next theorem states a simple condition between $n$ and $b$ which characterizes shifted-symmetric palintiples.   

\begin{theorem}
An $(n,b)$-palintiple is shifted-symmetric if and only if $\gcd(b-n,n^2-1)\geq n+1$.
\label{reg2condition}
\end{theorem}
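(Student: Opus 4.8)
The plan is to prove both implications directly from the fundamental relation (Equation \ref{fund}) and the digit formula (Equation \ref{digits}), using only two facts established in \cite{holt}: that the boundary carries vanish ($c_0=c_{k+1}=0$) and that every carry satisfies $0\le c_j\le n-1$. The single observation that drives everything is that shifted symmetry, $c_j=c_{k-j+1}$, specializes at $j=1$ to $c_1=c_k$. Substituting this together with $c_0=c_{k+1}=0$ into Equation \ref{digits} at $j=0$ collapses the expression to
\[
d_0=\frac{c_1(b-n)}{n^2-1}.
\]
This identity is the hinge of the whole argument, and both directions flow from reading it in different ways.

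For the forward direction (a shifted-symmetric palintiple forces the gcd bound), I would proceed as follows. Since there are no leading zeros, $d_0\neq 0$, so the displayed identity forces $c_1\neq 0$; combined with the carry bound this gives $1\le c_1\le n-1$. Because $d_0$ is an integer, $n^2-1$ divides $c_1(b-n)$. Writing $g=\gcd(b-n,n^2-1)$, $t=(n^2-1)/g$, and $b-n=gs$ with $\gcd(s,t)=1$, dividing out $g$ shows $t\mid c_1s$ and hence $t\mid c_1$. Therefore $t\le c_1\le n-1$, and so $g=(n^2-1)/t\ge(n^2-1)/(n-1)=n+1$, which is the claimed inequality.

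For the reverse direction I would exhibit an explicit two-digit example certifying existence. Assuming $g=\gcd(b-n,n^2-1)\ge n+1$, set $t=(n^2-1)/g$, an integer in $\{1,\ldots,n-1\}$ since $g\mid n^2-1$ and $g\ge n+1$. Define the two-digit number with $d_0=(b-n)/g$ and $d_1=(bn-1)/g$. Integrality of $d_0$ is immediate from $g\mid b-n$, and integrality of $d_1$ follows from the identity $bn-1=n(b-n)+(n^2-1)$, which gives $g\mid bn-1$. The bounds $1\le d_0,d_1\le b-1$ are routine (for instance $d_1\le(bn-1)/(n+1)<b$), and $d_1\neq d_0$ rules out a palindrome. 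A direct check that $d_1(b-n)=d_0(nb-1)$, equivalently that $c_0=0$, $c_1=t$, $c_2=0$ satisfy Equation \ref{fund}, confirms this is an $(n,b)$-palintiple; being two digits it is shifted-symmetric ($c_j=c_{2-j}$ holds on $\{c_0,c_1,c_2\}=\{0,t,0\}$) yet not symmetric since $c_1=t\neq 0$.

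I expect the forward direction to be essentially mechanical once the collapsed formula for $d_0$ is in hand; the only place demanding care is the coprimality step $t\mid c_1$, where one must divide through by $g$ before invoking $\gcd(s,t)=1$. The main obstacle, such as it is, lies in the reverse direction: one must verify that the two-digit candidate is simultaneously a legitimate base-$b$ representation (nonzero leading digit, not a palindrome) and genuinely shifted-symmetric rather than symmetric, so that it certifies the existence asserted alongside the characterization. A reader preferring examples of every admissible length may instead seed these digits into the canonical shifted-symmetric form catalogued in \cite{holt}.
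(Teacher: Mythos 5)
Your forward direction is correct and is essentially the paper's argument: from $c_0=c_{k+1}=0$ and $c_1=c_k$ you get $(n^2-1)\mid c_1(b-n)$ with $1\le c_1\le n-1$, hence $\frac{n^2-1}{\gcd(b-n,n^2-1)}\le n-1$, i.e.\ $\gcd(b-n,n^2-1)\ge n+1$. The paper reaches the same inequality by citing the congruence $(b-n)c_j\equiv 0 \pmod{n^2-1}$ from \cite{holt}; your derivation of the $j=0$ instance directly from Equation \ref{digits} is fine.

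The reverse direction, however, has a genuine gap: you have proved the wrong statement. The theorem quantifies over a \emph{given} $(n,b)$-palintiple $p$ --- ``an $(n,b)$-palintiple is shifted-symmetric iff $\gcd(b-n,n^2-1)\ge n+1$'' --- so the reverse implication must show that the carries of \emph{that} palintiple $p$ satisfy $c_j=c_{k-j+1}$ whenever the gcd condition holds. Your two-digit construction (which is correct as far as it goes, and is indeed the standard two-digit shifted-symmetric example) only establishes that \emph{some} shifted-symmetric $(n,b)$-palintiple exists under the hypothesis; it says nothing about an arbitrary $p$, which could a priori have any number of digits and any carry pattern. The distinction matters: the universal form is exactly what Corollaries \ref{irreg} and \ref{pal_type} rely on (homogeneity of the class of all $(n,b)$-palintiples for fixed $n,b$). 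The missing idea is the paper's induction on the carries of the given $p$: Equation \ref{digits} at $j=0$ gives $(b-n)c_1\equiv n(c_k-c_1)\pmod{n^2-1}$, so $g=\gcd(b-n,n^2-1)$ divides $c_k-c_1$ (using $\gcd(g,n)=1$); since $|c_k-c_1|\le n-1<n+1\le g$, this forces $c_1=c_k$, and the same argument applied to Equation \ref{digits} at general $j$ propagates $c_{j+1}=c_{k-j}$ inductively. You should replace the existence construction with this argument (your construction is still useful elsewhere --- it is essentially how the paper's Theorem \ref{regbases} certifies existence --- but it cannot carry the reverse implication here).
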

\begin{proof}
Suppose $p=(d_k, d_{k-1},\ldots, d_0)_b$ is an $(n,b)$-palintiple with carries $c_k$, $c_{k-1}$,$\ldots$, $c_0$.
If $p$ is shifted-symmetric, then, by results in \cite{holt}, $(b-n)c_j \equiv 0 \mod (n^2-1)$ must have non-trivial solutions
less than or equal to $n-1$. That is, $\frac{n^2-1}{\gcd(b-n,n^2-1)}\leq n-1$ which establishes the forward
implication.

Suppose then that $\gcd(b-n,n^2-1)\geq n+1$. Equation \ref{digits} implies that $(n^2-1)d_0=bc_1-nc_k$.
Consequently, $(b-n)c_1 \equiv n(c_k-c_1) \mod (n^2-1)$. Hence, $\gcd(b-n,n^2-1)$ divides $c_k-c_1$.
Without loss of generality suppose $c_k \geq c_1$. Then by our hypothesis, it follows that 
$c_k-c_1=\alpha \gcd(b-n,n^2-1) \geq \alpha (n+1)$ for some integer $\alpha \geq 0$.
But since $c_j \leq n-1$ for all $0 \leq j \leq k$, 
we see that $\alpha=0$ so that $c_1=c_k$. 
Equation \ref{digits} implies that $(b-n)c_1 \equiv 0 \mod (n^2-1)$.

Suppose now that $c_j=c_{k-j+1}$ and that $(b-n)c_j \equiv 0 \mod (n^2-1)$. Equation \ref{digits} implies
$(n^2-1)d_j=(nb-1)c_j+bc_{j+1}-nc_{k-j}$. This with our inductive hypotheses proves that
$bc_{j+1} \equiv nc_{k-j} \mod (n^2-1)$, that is, $(b-n)c_{j+1} \equiv n(c_{k-j}-c_{j+1}) \mod (n^2-1)$.
The same argument given above for $j=1$ shows that $c_{j+1}=c_{k-j}$ and the proof is complete.
\end{proof}

\begin{corollary}
If Conjecture \ref{carries_mod_n_minus_one} holds, 
then an $(n,b)$-palintiple is asymmetric if and only if $\gcd(b-n,n^2-1)\leq n-1$ and $(n+1)\nmid b$.
\label{irreg}
\end{corollary}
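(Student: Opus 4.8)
The plan is to exploit the fact, established in \cite{holt}, that the symmetric, shifted-symmetric, and asymmetric classes are mutually exclusive and exhaustive. An $(n,b)$-palintiple is therefore asymmetric precisely when it is neither symmetric nor shifted-symmetric, so the whole argument reduces to negating the two characterizing conditions already in hand and intersecting them. Note that Conjecture \ref{carries_mod_n_minus_one} is needed only to pin down the symmetric case; the shifted-symmetric characterization is unconditional.

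First I would invoke Corollary \ref{n_plus_one_divides_b}, which, granting Conjecture \ref{carries_mod_n_minus_one}, states that an $(n,b)$-palintiple is symmetric if and only if $(n+1)\mid b$. Negating this gives the condition $(n+1)\nmid b$ directly, with no further work required.

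Next I would turn to Theorem \ref{reg2condition}, which characterizes the shifted-symmetric case by $\gcd(b-n,n^2-1)\geq n+1$. Negating this yields $\gcd(b-n,n^2-1)<n+1$, i.e. $\gcd(b-n,n^2-1)\leq n$, so it remains only to reconcile this with the stated bound $\leq n-1$. The key observation is that $d:=\gcd(b-n,n^2-1)$ divides $n^2-1=(n-1)(n+1)$, and since $\gcd(n,n^2-1)=1$ we have $n\nmid(n^2-1)$; hence $d$ cannot equal $n$, and the largest divisor of $n^2-1$ lying strictly below $n+1$ is $n-1$. Thus $\gcd(b-n,n^2-1)<n+1$ if and only if $\gcd(b-n,n^2-1)\leq n-1$.

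Combining the two negations, an $(n,b)$-palintiple is asymmetric if and only if $(n+1)\nmid b$ and $\gcd(b-n,n^2-1)\leq n-1$, which is the claim. I expect the only genuinely non-mechanical step to be the small number-theoretic argument ruling out $d=n$; everything else is a direct application of the two prior results together with the exhaustiveness of the trichotomy.
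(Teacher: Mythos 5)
Your proposal is correct and follows exactly the route the paper intends: the paper states this corollary without proof, treating it as an immediate consequence of Corollary \ref{n_plus_one_divides_b}, Theorem \ref{reg2condition}, and the exhaustive trichotomy of classes. Your only addition is the small observation that $\gcd(b-n,n^2-1)$ cannot equal $n$ (since $n\nmid n^2-1$), which correctly justifies sharpening $\leq n$ to the stated bound $\leq n-1$.
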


\begin{corollary}
If Conjecture \ref{carries_mod_n_minus_one} holds, 
then two $(n,b)$-palintiples are both either symmetric, shifted-symmetric, or asymmetric.
\label{pal_type}
\end{corollary}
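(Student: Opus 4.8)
The plan is to exploit the fact that, granting Conjecture \ref{carries_mod_n_minus_one}, each of the three palintiple types has already been characterized by a condition involving only the base $b$ and the multiplier $n$, with no reference to the particular digit string. Specifically, Corollary \ref{n_plus_one_divides_b} shows that an $(n,b)$-palintiple is symmetric if and only if $(n+1)\mid b$; Theorem \ref{reg2condition} shows it is shifted-symmetric if and only if $\gcd(b-n,n^2-1)\geq n+1$; and Corollary \ref{irreg} shows it is asymmetric if and only if $\gcd(b-n,n^2-1)\leq n-1$ and $(n+1)\nmid b$. The entire content of the corollary is the observation that these are predicates on the pair $(n,b)$ alone.

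First I would fix two $(n,b)$-palintiples $p_1$ and $p_2$ and argue by cases on the type of $p_1$. If $p_1$ is symmetric, then the forward direction of Corollary \ref{n_plus_one_divides_b} gives $(n+1)\mid b$; since $p_2$ shares the same base and multiplier, the reverse direction (established in \cite{holt}) applies to $p_2$ and forces $p_2$ to be symmetric as well. If $p_1$ is shifted-symmetric, the forward direction of Theorem \ref{reg2condition} gives $\gcd(b-n,n^2-1)\geq n+1$, and the reverse direction applied to $p_2$ makes $p_2$ shifted-symmetric. The asymmetric case is identical, using both directions of Corollary \ref{irreg}. In each case $p_2$ inherits the type of $p_1$, which is exactly the claim.

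The only point that warrants care is confirming that the three $(n,b)$-conditions are genuinely mutually exclusive, so that the case analysis is unambiguous; this, however, is guaranteed in advance by the fact (established in \cite{holt}) that the symmetric, shifted-symmetric, and asymmetric classes are mutually exclusive and exhaustive for palintiples. For a direct sanity check one can note that $(n+1)\mid b$ forces $b-n\equiv 1 \pmod{n+1}$, hence $\gcd(b-n,n+1)=1$ and therefore $\gcd(b-n,n^2-1)=\gcd(b-n,n-1)\leq n-1$, so the symmetric and shifted-symmetric conditions cannot hold together; and since $\gcd(n,n^2-1)=1$, the quantity $\gcd(b-n,n^2-1)$ can never equal $n$, so the shifted-symmetric and asymmetric thresholds leave no gap. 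Because the substantive work has already been carried out in the preceding results, I expect no real obstacle here: the corollary is essentially a restatement of the principle that palintiple type is a function of $(n,b)$.
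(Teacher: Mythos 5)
Your proof is correct and takes the route the paper itself intends: the paper states this corollary without proof precisely because, as you observe, it follows immediately from the fact that Corollary \ref{n_plus_one_divides_b}, Theorem \ref{reg2condition}, and Corollary \ref{irreg} characterize each class by a condition on $(n,b)$ alone. Your added verification that the three conditions are mutually exclusive with no gap (via $\gcd(b-n,n+1)=1$ when $(n+1)\mid b$, and $\gcd(b-n,n^2-1)\neq n$) is a nice touch but not a departure from the paper's approach.
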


Corollary \ref{irreg} sheds some light upon the case of asymmetric palintiples by
narrowing down possible values of $n$ and $b$ for which they may exist. 
It should be noted, however, that this result does not specify conditions which guarantee 
their existence. Finding such conditions is key to finding all palintiples.

\section{Symmetric Bases}

The work of Kendrick \cite{kendrick} reveals the asymmetric class to be more diverse than previously thought.
Given the large variety of Young graph isomorphism classes that arise for even relatively low bases ($b \leq 336$)
and the apparent rate at which the number of isomorphism classes grows with $b$,
the difficulties presented by the asymmetric class are not surprising.

Such difficulties naturally lead to the question of bases which do not allow for their 
existence (that is, bases for which all palintiples are known). 
Considering bases up to 20, asymmetric palintiples are known to exist in bases 8, 11, 14, 15, 17, 18, 19, and 20.
On the other hand, methods developed by Young \cite{young_1} along with Corollary \ref{irreg} show that if
Conjecture \ref{carries_mod_n_minus_one} holds, that
no asymmetric palintiples exist in bases 5, 7, 9, and 13. 
In similar fashion, it is conjectured that only symmetric palintiples exist in bases 10, 12, and 16.
Arguments in \cite{holt} show that no asymmetric palintiples exist in bases 3, 4, and 6.

We note that no bases exist for which only shifted-symmetric palintiples exist since symmetric palintiples exist
for every base.  With these facts in mind we make the following definition.

\begin{definition}
 A base for which no asymmetric palintiples exist is called \textit{symmetric}. A base for which only 
 symmetric palintiples exist is called \textit{strongly symmetric}.
\end{definition}

It is no coincidence that, with the exception of 3, 
all known strongly symmetric bases are one less than a prime number as the following shows.

\begin{theorem}
Suppose $b>3$ is a strongly symmetric base. Then $b+1$ is prime. 
 \label{regbases}
\end{theorem}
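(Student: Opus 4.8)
The plan is to prove the contrapositive: assuming $b>3$ and that $b+1$ is \emph{composite}, I will exhibit a non-symmetric $(n,b)$-palintiple, so that $b$ fails to be strongly symmetric. The guiding idea is that a suitable divisor of $b+1$ produces a multiplier $n$ for which $\gcd(b-n,n^2-1)$ is forced to be at least $n+1$; by Theorem \ref{reg2condition} this already makes every $(n,b)$-palintiple shifted-symmetric, and hence non-symmetric.

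First I would choose the multiplier. Since $b>3$ we have $b+1>4$, and a composite $b+1$ then admits a divisor $d$ with $3\le d\le b$: the only composite number whose sole intermediate (proper, nontrivial) divisor is $2$ is $4$ itself, and $4$ is excluded precisely by $b>3$; every larger composite has an intermediate divisor at least $3$, which is necessarily at most $(b+1)/2\le b$. Setting $n=d-1$ gives $2\le n\le b-1$, so $1<n<b$ is a legitimate multiplier. Because $n+1=d$ divides $b+1$, we have $b-n=(b+1)-(n+1)\equiv 0 \pmod{n+1}$, say $b-n=(n+1)t$ with $t\ge 1$. Since $n+1$ also divides $n^2-1=(n+1)(n-1)$, it divides $\gcd(b-n,n^2-1)$, whence $\gcd(b-n,n^2-1)\ge n+1$ and Theorem \ref{reg2condition} applies.

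What remains --- and what I expect to be the crux --- is to show that such a palintiple actually \emph{exists}, since Theorem \ref{reg2condition} only classifies palintiples already known to exist. Here I would give an explicit construction from the carry sequence $c_0=0$ and $c_1=\cdots=c_k=n-1$, which is visibly shifted-symmetric (one checks $c_j=c_{k-j+1}$) but not symmetric (since $c_0=0\ne n-1=c_k$). Feeding this sequence into Equation \ref{digits} yields $d_0=t$, $d_k=\frac{nb-1}{n+1}$, and $d_j=b-1$ at the interior indices; the divisibility $n+1\mid b+1$ is exactly what forces each of these to be an integer lying in $\{0,\dots,b-1\}$ with $d_k\ne 0$ and with $d_0\ne d_k$ (so the result is no palindrome). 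The cleanest witness is the two-digit number $p=(d_1,d_0)_b$ with $d_0=t$ and $d_1=\frac{nb-1}{n+1}$.

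To confirm that $p$ is genuinely an $(n,b)$-palintiple I would verify the fundamental relations \ref{fund} for $0\le j\le k$; multiplying the $j$th relation by $b^j$ and summing telescopes the carry terms (using $c_0=c_{k+1}=0$) to return $p=n\cdot(d_0,\dots,d_k)_b$, so the displayed digits do define a palintiple whose carries are exactly the chosen sequence. As those carries are not symmetric, $p$ is a non-symmetric palintiple, and therefore $b$ is not strongly symmetric. This establishes the contrapositive, and since $b+1>4$ is then neither $1$ nor composite, $b+1$ is prime. The main obstacle is thus not the arithmetic of divisors but the existence step: one must produce a concrete valid digit string and check the boundary indices, and care is needed at the exceptional case $b+1=4$, which is exactly why the hypothesis $b>3$ cannot be dropped --- note that $b=3$ is strongly symmetric even though $4$ is not prime.
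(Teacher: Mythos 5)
Your proof is correct and follows essentially the same route as the paper: pass to the contrapositive, choose $n$ with $n+1\mid b+1$, deduce $\gcd(b-n,n^2-1)\ge n+1$, and produce a shifted-symmetric (hence non-symmetric) $(n,b)$-palintiple. The only difference is that you build the witness explicitly (the two-digit example with carries $0$ and $n-1$, which checks out), whereas the paper defers the existence step to the construction methods of \cite{holt}; your version is more self-contained but identical in strategy.
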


\begin{proof}
Suppose $b+1$ is composite. Choose $1<n<b$ such that $n+1$ divides $b+1$.
Then $\gcd(b-n,n+1)=\gcd(b+1,n+1)=n+1$. Therefore $\gcd(b-n,n^2-1)\geq n+1$
so that there is at least one non-trivial solution $c$ to the congruence $(b-n)c \equiv 0 \mod (n^2-1)$
that is less than or equal to $n-1$. Methods in \cite{holt} then allow us to construct
a shifted-symmetric $(n,b)$-palintiple. Hence, $b$ cannot be a strongly symmetric base. 
\end{proof}

It is still unknown if infinitely many symmetric or strongly symmetric bases exist.
This number may very well be finite:
electronically implementing the methods of Young \cite{young_1} we see that, 
with the exception of those mentioned above, there are no symmetric bases less than 544321.
Thus, the converse of Theorem \ref{regbases} fails to hold in most cases. 
We leave these as open questions.

\end{document}